\def\R{\mathbb{R}}
\newtheorem{nnassumption}{\bf Assumption}
\newtheorem{nntheorem}{\bf Theorem}
\newenvironment{theorem}{\begin{nntheorem}\it}{\end{nntheorem}}
\newtheorem{nncorollary}{\bf Corollary}
\newtheorem{nndefinition}{\bf Definition}
\newtheorem{nnproposition}{\bf Proposition}
\newtheorem{nnproblem}{\bf Problem}
\newtheorem{nnlemma}{\bf Lemma}
\newtheorem{nnremark}{\bf Remark}
\newenvironment{remark}{\begin{nnremark} \rm }{\hfill \hspace*{1pt}\hfill $\circ$\end{nnremark}}
\newtheorem{nnexample}{\bf Example}
\newenvironment{proof}{{\bf Proof.}}{\hfill \hspace*{1pt}\hfill $\Box$}
\begin{document}
%
% paper title
% Titles are generally capitalized except for words such as a, an, and, as,
% at, but, by, for, in, nor, of, on, or, the, to and up, which are usually
% not capitalized unless they are the first or last word of the title.
% Linebreaks \\ can be used within to get better formatting as desired.
% Do not put math or special symbols in the title.
\title{Global Output Feedback Stabilization of Semilinear Reaction-Diffusion PDEs}
%
%
% author names and IEEE memberships
% note positions of commas and nonbreaking spaces ( ~ ) LaTeX will not break
% a structure at a ~ so this keeps an author's name from being broken across
% two lines.
% use \thanks{} to gain access to the first footnote area
% a separate \thanks must be used for each paragraph as LaTeX2e's \thanks
% was not built to handle multiple paragraphs
%

\author{Hugo~Lhachemi and Christophe~Prieur% <-this % stops a space
\thanks{Hugo Lhachemi is with Universit{\'e} Paris-Saclay, CNRS, CentraleSup{\'e}lec, Laboratoire des signaux et syst{\`e}mes, 91190, Gif-sur-Yvette, France (e-mail: hugo.lhachemi@centralesupelec.fr).
\newline\indent Christophe Prieur is with Universit{\'e} Grenoble Alpes, CNRS, Grenoble-INP, GIPSA-lab, F-38000, Grenoble, France (e-mail: christophe.prieur@gipsa-lab.fr). 

The work of H. Lhachemi has been partially supported by ANR PIA funding: ANR-20-IDEES-0002. The work of C. Prieur has been partially supported by MIAI@Grenoble Alpes (ANR-19-P3IA-0003).
}
}

% note the % following the last \IEEEmembership and also \thanks -
% these prevent an unwanted space from occurring between the last author name
% and the end of the author line. i.e., if you had this:
%
% \author{....lastname \thanks{...} \thanks{...} }
%                     ^------------^------------^----Do not want these spaces!
%
% a space would be appended to the last name and could cause every name on that
% line to be shifted left slightly. This is one of those "LaTeX things". For
% instance, "\textbf{A} \textbf{B}" will typeset as "A B" not "AB". To get
% "AB" then you have to do: "\textbf{A}\textbf{B}"
% \thanks is no different in this regard, so shield the last } of each \thanks
% that ends a line with a % and do not let a space in before the next \thanks.
% Spaces after \IEEEmembership other than the last one are OK (and needed) as
% you are supposed to have spaces between the names. For what it is worth,
% this is a minor point as most people would not even notice if the said evil
% space somehow managed to creep in.

% The paper headers
%\markboth{Journal of \LaTeX\ Class Files,~Vol.~14, No.~8, August~2015}%
\markboth{}%
{Lhachemi \MakeLowercase{\textit{et al.}}}
% The only time the second header will appear is for the odd numbered pages
% after the title page when using the twoside option.
%
% *** Note that you probably will NOT want to include the author's ***
% *** name in the headers of peer review papers.                   ***
% You can use \ifCLASSOPTIONpeerreview for conditional compilation here if
% you desire.

% If you want to put a publisher's ID mark on the page you can do it like
% this:
%\IEEEpubid{0000--0000/00\$00.00~\copyright~2015 IEEE}
% Remember, if you use this you must call \IEEEpubidadjcol in the second
% column for its text to clear the IEEEpubid mark.

% use for special paper notices
%\IEEEspecialpapernotice{(Invited Paper)}

% make the title area
\maketitle

% As a general rule, do not put math, special symbols or citations
% in the abstract or keywords.
\begin{abstract}
This paper addresses the topic of global output feedback stabilization of semilinear reaction-diffusion PDEs. The semilinearity is assumed to be confined into a sector condition. We consider two different types of actuation configurations, namely: bounded control operator and right Robin boundary control. The measurement is selected as a left Dirichlet trace. The control strategy is finite dimensional and is designed based on a linear version of the plant. We derive a set of sufficient conditions ensuring the global exponential stabilization of the semilinear reaction-diffusion PDE. These conditions are shown to be feasible provided the order of the controller is large enough and the size of the sector condition in which the semilinearity is confined into is small enough.
\end{abstract}

% Note that keywords are not normally used for peerreview papers.
\begin{IEEEkeywords}
Semilinear reaction-diffusion PDEs, output feedback stabilization, finite-dimensional control.
\end{IEEEkeywords}

% For peer review papers, you can put extra information on the cover
% page as needed:
% \ifCLASSOPTIONpeerreview
% \begin{center} \bfseries EDICS Category: 3-BBND \end{center}
% \fi
%
% For peerreview papers, this IEEEtran command inserts a page break and
% creates the second title. It will be ignored for other modes.
\IEEEpeerreviewmaketitle

\section{Introduction}\label{sec: Introduction}

The topic of feedback stabilization of linear reaction-diffusion PDEs has been intensively studied in the literature~\cite{boskovic2001boundary,liu2003boundary,prieur2018boundary} using different approaches such as backstepping~\cite{krstic2008boundary} and spectral reduction methods~\cite{russell1978controllability,coron2004global,coron2006global}. The extension of these approaches to the stabilization of semilinear reaction-diffusion PDEs remains challenging. Among the reported contributions, one can find the study of stability by means of strict Lyapunov functionals~\cite{mazenc2011strict}, observer design~\cite{meurer2013extended,kitsos2020high,zhang2020switching}, and control using quasi-static deformations~\cite{coron2004global}, state-feedback~\cite{karafyllis2021lyapunov} or network control~\cite{selivanov2016distributed,wu2019improved}. 

This paper addresses the topic of global output feedback stabilization of 1-D semilinear reaction-diffusion PDEs with Dirichlet/Neumann/Robin boundary conditions. We assume that the semilinearity is confined into a sector condition. We consider two different configurations for the actuation scheme: bounded control operator and right Robin boundary control. The measurement is selected as the left Dirichlet trace. The adopted output feedback control strategy is composed of a finite-dimensional observer~\cite{curtain1982finite,balas1988finite,harkort2011finite} that leverage control architectures introduced first in \cite{sakawa1983feedback} augmented with a LMI based procedure initially proposed in~\cite{katz2020constructive,katzadelayed}. In this paper, we further adopt the enhanced procedures reported in~
\cite{lhachemi2020finite,lhachemi2021nonlinear} that allow the design in a generic and systematic manner of finite-dimensional observer-based control strategies for general 1-D reaction-diffusion PDEs with Dirichlet/Neumann/Robin boundary control and Dirichlet/Neumann boundary measurements. In particular, these procedures allow to perform the control design directly with the control input instead of its time derivative as classically done in the case of boundary control system, see~\cite[Sec.~3.3]{curtain2012introduction} for further details. This capability to work directly with the control input to perform the control design for general 1-D reaction diffusion PDEs with Dirichlet/Neumann/Robin boundary control and Dirichlet/Neumann boundary measurements is key for the systematic extension of the control design procedure to a number of settings such as predictor-based compensation of arbitrarily long input and output delays~\cite{lhachemi2021predictor,lhachemi2021boundary2}. 

This type of approach has also been shown to be efficient for the local stabilization, with estimation of the domain of attraction, of linear reaction-diffusion PDEs in the presence of a saturation~\cite{lhachemi2021local} (see also \cite{mironchenko2020local} in the context of a state-feedback feedback), as well as the global stabilization of linear-reaction-diffusion PDEs in the presence of a sector nonlinearity in the application of the boundary control~\cite{lhachemi2021nonlinear}. Nevertheless, the possibility to successfully design in a systematic manner finite-dimensional control strategies in the presence of nonlinearities for general reaction-diffusion PDEs has only been demonstrated for linear PDEs with a nonlinearity introduced when applying the control input. In this context, the objective of this work is to demonstrate for the first time that finite-dimensional control strategies can also be designed for achieving the output feedback stabilization of semilinear reaction-diffusion PDEs (i.e., in which the nonlinearity applies to the reaction term). Assuming that the nonlinearity satisfies a sector condition, we derive a set of sufficient LMI conditions ensuring the global output feedback stabilization of the plant. We show that the derived stability conditions are always feasible when selecting the order of the observer large enough and for a size of the sector condition small enough.

The rest of the paper is organized as follows. Notations and preliminary properties are summarized in Section~\ref{sec: notation and properties}. The control design for semilinear reaction-diffusion PDEs in the case of a bounded input operator and left Dirichlet measurement is addressed in Section~\ref{sec: case 2}. Then, Section~\ref{sec: case 3} reports the case of right Robin boundary control and left Dirichlet measurement. Finally, concluding remarks are formulated in Section~\ref{sec: conclusion}.

\section{Notation and properties of Sturm-Liouville operators}\label{sec: notation and properties}

\subsection{Notation}
Real spaces $\R^n$ are equipped with the usual Euclidean norm denoted by $\Vert\cdot\Vert$. The associated induced norms of matrices are also denoted by $\Vert\cdot\Vert$. For any two vectors $X$ and $Y$, $ \mathrm{col} (X,Y)$ represents the vector $[X^\top,Y^\top]^\top$. The space of square integrable functions on $(0,1)$ is denoted by $L^2(0,1)$ and is endowed with the inner product $\langle f , g \rangle = \int_0^1 f(x) g(x) \,\mathrm{d}x$. The associated norm is denoted by $\Vert \cdot \Vert_{L^2}$. For an integer $m \geq 1$, $H^m(0,1)$ stands for the $m$-order Sobolev space and is endowed with its usual norm $\Vert \cdot \Vert_{H^m}$. For any symmetric matrix $P \in\R^{n \times n}$, $P \succeq 0$ (resp. $P \succ 0$) indicates that $P$ is positive semi-definite (resp. positive definite).

\subsection{Properties of Sturm-Liouville operators}

Because reaction-diffusion PDEs are inherently related to Sturm-Liouville operators, we summarize here their key properties that will be used in the sequel. 

Let $\theta_1,\theta_2\in[0,\pi/2]$, $p \in \mathcal{C}^1([0,1])$, and $q \in \mathcal{C}^0([0,1])$ with $p > 0$ and $q \geq 0$. The Sturm-Liouville operator~\cite{renardy2006introduction} is defined by 
$$\mathcal{A} = - (pf')' + q f$$ 
on the domain 
\begin{align*}
D(\mathcal{A}) = \{ f \in H^2(0,1) \,:\, & \cos(\theta_1) f(0) - \sin(\theta_1) f'(0) = 0 ,\,\\ 
& \cos(\theta_2) f(1) + \sin(\theta_2) f'(1) = 0 \}.
\end{align*}
Then the eigenvalues $\lambda_n$, $n \geq 1$, of $\mathcal{A}$ are simple, non-negative, and form an increasing sequence with $\lambda_n \rightarrow + \infty$ as $n \rightarrow + \infty$. The corresponding unit eigenvectors $\phi_n \in L^2(0,1)$ form a Hilbert basis. The domain of the operator $\mathcal{A}$ is equivalently characterized by 
$$D(\mathcal{A}) = \left\{ f \in L^2(0,1) \,:\, \sum_{n\geq 1} \vert \lambda_n \vert ^2 \vert \left< f , \phi_n \right> \vert^2 < +\infty \right\} .$$
Moreover we have $\mathcal{A} f = \sum_{n \geq 1} \lambda_n \left< f , \phi_n \right> \phi_n$ for all $f \in D(\mathcal{A})$. 

Let $p_*,p^*,q^* \in \R$ be such that $0 < p_* \leq p(x) \leq p^*$ and $0 \leq q(x) \leq q^*$ for all $x \in [0,1]$, then it holds that:
\begin{equation}\label{eq: estimation lambda_n}
0 \leq \pi^2 (n-1)^2 p_* \leq \lambda_n \leq \pi^2 n^2 p^* + q^*
\end{equation}
for all $n \geq 1$, see e.g.~\cite{orlov2017general}. Assuming further that $q > 0$, performing an integration by parts and using the continuous embedding $H^1(0,1) \subset L^\infty(0,1)$, we obtain the existence of constants $C_1,C_2 > 0$ such that 
\begin{align}
C_1 \Vert f \Vert_{H^1}^2 \leq 
\sum_{n \geq 1} \lambda_n \left< f , \phi_n \right>^2
= \left< \mathcal{A}f , f \right>
\leq C_2 \Vert f \Vert_{H^1}^2 \label{eq: inner product Af and f}
\end{align}
for all $f \in D(\mathcal{A})$. This implies that $f(0) = \sum_{n \geq 1} \left< f , \phi_n \right> \phi_n(0)$ and $f'(0) = \sum_{n \geq 1} \left< f , \phi_n \right> \phi_n'(0)$ hold for all $f \in D(\mathcal{A})$. Finally, if we further assume that $p \in \mathcal{C}^2([0,1])$, we have for any $x \in [0,1]$ that $\phi_n(x) = O(1)$ and $\phi_n'(x) = O(\sqrt{\lambda_n})$ as $n \rightarrow +\infty$, see e.g.~\cite{orlov2017general}. 

For an arbitrarily given integer $N \geq 1$, we define $\mathcal{R}_N f = \sum_{n \geq N+1} \left< f , \phi_n \right> \phi_n$.

\section{Distributed command and Dirichlet boundary measurement}\label{sec: case 2}

\subsection{Problem setting and spectral reduction}

We study first the stabilization problem of the reaction-diffusion PDE described by
\begin{subequations}\label{eq: RD system 1}
\begin{align}
& z_t(t,x) = \left( p(x) z_x(t,x) \right)_x - \tilde{q}_0(x) z(t,x) + f(t,x,z(t,x)) \nonumber \\
& \phantom{z_t(t,x) = }\; + b(x) u(t) \label{eq: RD system 1 - 1} \\
&\cos(\theta_1) z(t,0) - \sin(\theta_1) z_x(t,0) = 0 \label{eq: RD system 1 - 2} \\
&\cos(\theta_2) z(t,1) + \sin(\theta_2) z_x(t,1) = 0 \label{eq: RD system 1 - 3} \\
& z(0,x) = z_0(x) . \label{eq: RD system 1 - 4}
\end{align}
\end{subequations}
for $t > 0$ and $x \in(0,1)$. Here we have $\theta_1 \in (0,\pi/2]$, $\theta_2 \in [0,\pi/2]$, $p \in \mathcal{C}^2([0,1])$ with $p > 0$, and $\tilde{q}_0 \in \mathcal{C}^0([0,1])$. The distributed control input is $u(t) \in\R$ and acts on the system via the shape function $b \in L^2(0,1)$. The state of the reaction-diffusion PDE is $z(t,\cdot) \in L^2(0,1)$ while $z_0 \in L^2(0,1)$ is the initial condition. The function $f : \R_+ \times[0,1]\times\R \rightarrow \R$ is assumed to be globally Lipchitz continuous in $z$, uniformly in $(t,x)$, so that $f(\cdot,\cdot,0)=0$. Let $\tilde{q}_f \in \mathcal{C}^0([0,1])$ and $k_f > 0$ be such that 
\begin{equation}\label{eq: sector condition}
\vert f(t,x,z) - \tilde{q}_f(x) z \vert \leq k_f \vert z \vert , \quad \forall t \geq 0, \quad \forall x\in[0,1], \quad \forall z \in\R .
\end{equation}
Hence (\ref{eq: RD system 1 - 1}) can be written as 
\begin{align*}
& z_t(t,x) = \left( p(x) z_x(t,x) \right)_x - \tilde{q}(x) z(t,x) + g(t,x,z(t,x)) \\
& \phantom{z_t(t,x) = }\; + b(x) u(t)
\end{align*}
where $\tilde{q} = \tilde{q}_0 - \tilde{q}_f$ and $g(t,x,z) = f(t,x,z) - \tilde{q}_f(x) z$ with 
\begin{equation}\label{eq: sector condition bis}
\vert g(t,x,z) \vert \leq k_f \vert z \vert , \quad \forall t \geq 0 , \quad \forall x\in[0,1], \quad \forall z \in\R .
\end{equation}
The system output takes the form of the left Dirichlet trace:
\begin{equation}\label{eq: Dirichlet measurement operator}
y(t) = z(t,0) .
\end{equation}

In perspective of control design, recalling that $\tilde{q} = \tilde{q}_0 - \tilde{q}_f$, and without loss of generality, we pick a function $q \in \mathcal{C}^0([0,1])$ and a constant $q_c \in \R$ such that
\begin{equation}\label{eq: decomposition of the reaction term}
\tilde{q} = q - q_c , \quad q > 0 .
\end{equation} 
The projection of (\ref{eq: RD system 1}) into the Hilbert basis $(\phi_n)_{n \geq 1}$ gives 
\begin{equation}\label{eq: RD system 1 spectral reduction 1}
\dot{z}_n (t)= (-\lambda_n+q_c) z_n(t) + b_n u(t) + g_n(t)
\end{equation}
where $z_n(t) = \left< z(t,\cdot) , \phi_n \right>$, $b_n = \left< b , \phi_n \right>$, and $g_n(t) = \left< g(t,\cdot,z(t,\cdot)) , \phi_n \right>$. Moreover, considering classical solutions, the system output (\ref{eq: Dirichlet measurement operator}) is expressed as the series expansion:
\begin{equation}\label{eq: RD system 1 spectral reduction 3}
y(t) = \sum_{n \geq 1} \phi_n(0) z_n(t) . 
\end{equation}

\subsection{Control design and truncated model for stability analysis}\label{subsec: control design 1}

Let $\delta > 0$ be the desired exponential decay rate for the closed-loop system trajectories. Let an integer $N_0 \geq 1$ be such that $-\lambda_n + q_c < - \delta < 0$ for all $n \geq N_0 + 1$. We introduce an arbitrary integer $N \geq N_0 + 1$ that will be specified later. The control strategy takes the form of the following dynamics: 
\begin{subequations}\label{eq: controller 2}
\begin{align}
\dot{\hat{z}}_n(t) & = (-\lambda_n+q_c) \hat{z}_n(t) + b_n u(t) \nonumber \\
& \phantom{=}\; - l_n \left\{ \sum_{k = 1}^N \phi_k(0) \hat{z}_k(t) - y(t) \right\} ,\quad 1 \leq n \leq N_0 \label{eq: controller 2 - 1} \\
\dot{\hat{z}}_n(t) & = (-\lambda_n+q_c) \hat{z}_n(t) + b_n u(t) ,\quad N_0+1 \leq n \leq N \label{eq: controller 2 - 2} \\
u(t) & = \sum_{n=1}^{N_0} k_n \hat{z}_n(t) \label{eq: controller 2 - 3}
\end{align}
\end{subequations}
where $l_n,k_n \in\R$ are the observer and feedback gains, respectively. 

In preparation for stability analysis, we need to build a truncated model capturing the $N$ first modes of the PDE (\ref{eq: RD system 1}) as well as the dynamics of the controller (\ref{eq: controller 2}). To do so, let us define the error of observation $e_n = z_n - \hat{z}_n$, the scaled error of observation $\tilde{e}_n = \sqrt{\lambda_n} e_n$, and the vectors $\hat{Z}^{N_0} = \begin{bmatrix} \hat{z}_1 & \ldots & \hat{z}_{N_0} \end{bmatrix}^\top$, $E^{N_0} = \begin{bmatrix} e_1 & \ldots & e_{N_0} \end{bmatrix}^\top$, $\hat{Z}^{N-N_0} = \begin{bmatrix} \hat{z}_{N_0+1} & \ldots & \hat{z}_{N} \end{bmatrix}^\top$, $\tilde{E}^{N-N_0} = \begin{bmatrix} \tilde{e}_{N_0 +1} & \ldots & \tilde{e}_{N} \end{bmatrix}^\top$, $R_1 = \begin{bmatrix} g_1 & \ldots & g_{N_0} \end{bmatrix}^\top$, $\tilde{R}_2 = \begin{bmatrix} \sqrt{\lambda_{N_0+1}} g_{N_0 +1} & \ldots & \sqrt{\lambda_{N}} g_{N} \end{bmatrix}^\top$, and $R = \mathrm{col}(R_1,\tilde{R}_2)$. We also introduce the matrices defined by $A_0 = \mathrm{diag}(-\lambda_1+q_c,\ldots,-\lambda_{N_0}+q_c)$, $A_1 = \mathrm{diag}(-\lambda_{N_0+1}+q_c,\ldots,-\lambda_N+q_c)$, $B_0 = \begin{bmatrix} b_1 & \ldots & b_{N_0} \end{bmatrix}^\top$, $B_1 = \begin{bmatrix} b_{N_0+1} & \ldots & b_N \end{bmatrix}^\top$, $C_0 = \begin{bmatrix} \phi_1(0) & \ldots & \phi_{N_0}(0) \end{bmatrix}$, $\tilde{C}_1 = \begin{bmatrix} \frac{\phi_{N_0+1}(0)}{\sqrt{\lambda_{N_0+1}}} & \ldots & \frac{\phi_{N}(0)}{\sqrt{\lambda_{N}}} \end{bmatrix}$, $K = \begin{bmatrix} k_1 & \ldots & k_{N_0} \end{bmatrix}$, and $L = \begin{bmatrix} l_1 & \ldots & l_{N_0} \end{bmatrix}^\top$. Therefore, we obtain from (\ref{eq: RD system 1 spectral reduction 1}-\ref{eq: RD system 1 spectral reduction 3}) and (\ref{eq: controller 2}) that
\begin{subequations}\label{eq: RD system 1 - dynamics truncated model - 5eq}
\begin{align}
u & = K \hat{Z}^{N_0} \\
\dot{\hat{Z}}^{N_0} & = (A_0 + B_0 K) \hat{Z}^{N_0} + LC_0 E^{N_0} + L \tilde{C}_1 \tilde{E}^{N-N_0} + L \zeta \\
\dot{E}^{N_0} & = (A_0 - LC_0) E^{N_0} - L \tilde{C}_1 \tilde{E}^{N-N_0} - L \zeta + R_1 \\
\dot{\hat{Z}}^{N-N_0} & = A_1 \hat{Z}^{N-N_0} + B_1 K \hat{Z}^{N_0} \\
\dot{\tilde{E}}^{N-N_0} & = A_1 \tilde{E}^{N-N_0} + \tilde{R}_2 .
\end{align}
\end{subequations}
with $\zeta = \sum_{n \geq N+1} \phi_n(0) z_n$. Introducing the state vector
\begin{equation}\label{eq: RD system 2 - state truncated model}
X = \mathrm{col}\left( \hat{Z}^{N_0} , E^{N_0} , \hat{Z}^{N-N_0} , \tilde{E}^{N-N_0} \right) ,
\end{equation}
we infer that the following truncated dynamics hold
\begin{equation}\label{eq: RD system 2 dynamics truncated model}
\dot{X} = F X + \mathcal{L} \zeta + G R
\end{equation}
where
\begin{equation*}
F = \begin{bmatrix}
A_0 + B_0 K & LC_0 & 0 & L\tilde{C}_1 \\
0 & A_0 - L C_0 & 0 & - L\tilde{C}_1 \\
B_1 K & 0 & A_1 & 0 \\
0 & 0 & 0 & A_1
\end{bmatrix}
, \quad 
G = \begin{bmatrix}
0 & 0 \\ I & 0 \\ 0 & 0 \\ 0 & I
\end{bmatrix} 
\end{equation*}
and $\mathcal{L} = \mathrm{col}(L,-L,0,0)$. Defining $\tilde{K} = \begin{bmatrix} K & 0 & 0 & 0 \end{bmatrix}$, we also have that 
\begin{equation}\label{eq: u in function of X}
u = \tilde{K} X .
\end{equation}
We finally define the matrices 
\begin{equation*}
\Lambda = \mathrm{diag}(\lambda_{N_0+1},\ldots,\lambda_{N})
, \quad
\Omega = \begin{bmatrix}
I & I & 0 & 0 \\ I & I & 0 & 0 \\ 0 & 0 & I & \Lambda^{-1/2} \\ 0 & 0 & \Lambda^{-1/2} & \Lambda^{-1}
\end{bmatrix}
\end{equation*}
and $\tilde{\Lambda} = \mathrm{diag}(I,\Lambda)$. In particular, we have that $\Omega \preceq 2 \max(1,1/\lambda_{N_0+1}) I$ and $\tilde{\Lambda}^{-1} \succeq \min(1,1/\lambda_N) I$.

\subsection{Stability result}

\begin{theorem}\label{thm2}
Let $\theta_1 \in (0,\pi/2]$, $\theta_2 \in [0,\pi/2]$, $p \in\mathcal{C}^2([0,1])$ with $p > 0$, $\tilde{q}_0 \in\mathcal{C}^0([0,1])$, and $b \in L^2(0,1)$. Let $f : \R_+\times[0,1]\times\R \rightarrow \R$ be globally Lipchitz continuous in $z$, uniformly in $(t,x)$, so that $f(\cdot,\cdot,0)=0$. Let $\tilde{q}_f \in \mathcal{C}^0([0,1])$ and $k_f > 0$ be so that (\ref{eq: sector condition}) holds. Let $q \in\mathcal{C}^0([0,1])$ and $q_c \in\R$ be such that (\ref{eq: decomposition of the reaction term}) holds. Let $\delta > 0$ and $N_0 \geq 1$ be such that $-\lambda_n + q_c < - \delta$ for all $n \geq N_0 + 1$. Assume that\footnote{This implies that $(A_0,B_0)$ satisfies the Kalman condition. Note that $(A_0,C_0)$ satisfies the Kalman condition by arguments from~\cite{lhachemi2021nonlinear}.} $b_n \neq 0$ for all $1 \leq n \leq N_0$. Let $K\in\R^{1 \times N_0}$ and $L\in\R^{N_0}$ be such that $A_0 + B_0 K$ and $A_0 - L C_0$ are Hurwitz with eigenvalues that have a real part strictly less than $-\delta<0$. For a given $N \geq N_0 +1$, assume that there exist a symmetric positive definite $P\in\R^{2N \times 2N}$, positive real numbers $\alpha_1,\alpha_2 > 1$ and $\beta,\gamma > 0$ such that 
\begin{equation}\label{eq: thm2 constraints}
\Theta_1 \preceq 0 , \quad
\Theta_2 \leq 0 
\end{equation}
where
\begin{align*}
\Theta_1 & = \begin{bmatrix}
\Theta_{1,1,1} & P \mathcal{L} & P G  \\
\mathcal{L}^\top P & -\beta & 0 \\
G^\top P & 0 & - \alpha_2 \gamma \tilde{\Lambda}^{-1}
\end{bmatrix} \\
\Theta_{1,1,1} & = F^\top P + P F + 2 \delta P + \alpha_1\gamma \Vert \mathcal{R}_N b \Vert_{L^2}^2 \tilde{K}^\top \tilde{K} + \alpha_2 \gamma k_f^2 \Omega \\
\Theta_2 & = 2 \gamma \left\{ - \left[ 1 - \frac{1}{2} \left( \frac{1}{\alpha_1} + \frac{1}{\alpha_2} \right) \right] \lambda_{N+1} + q_c + \delta \right\} \\
& \phantom{=}\; + \beta M_\phi + \frac{\alpha_2 \gamma k_f^2}{\lambda_{N+1}} 
\end{align*}
with $M_\phi = \sum_{n \geq N+1} \frac{\phi_n(0)^2}{\lambda_n}$. Then, considering the closed-loop system composed of the plant (\ref{eq: RD system 1}) with the system output (\ref{eq: Dirichlet measurement operator}) and the controller (\ref{eq: controller 2}), there exists $M > 0$ such that for any initial conditions $z_0 \in H^2(0,1)$ and $\hat{z}_n(0) \in\R$ so that $\cos(\theta_1) z_0(0) - \sin(\theta_1) z'_0(0) = 0$ and $\cos(\theta_2) z_0(1) + \sin(\theta_2) z'_0(1) = 0$, the system trajectory satisfies 
$$\Vert z(t,\cdot)\Vert_{H^1}^2 + \sum_{n=1}^N \hat{z}_n(t)^2 \leq M e^{-2\kappa t} \left( \Vert z_0 \Vert_{H^1}^2  + \sum_{n=1}^N \hat{z}_n(0)^2 \right)$$ 
for all $t \geq 0$. Moreover, when selecting $N$ to be sufficiently large, there exists $k_f > 0$ (small enough) so that the constraints (\ref{eq: thm2 constraints}) are feasible.
\end{theorem}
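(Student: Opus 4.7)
The plan is to introduce the Lyapunov functional
\[
V(t) = X(t)^\top P X(t) + \gamma \sum_{n \geq N+1} \lambda_n z_n(t)^2,
\]
which pairs the $2N$-dimensional state $X$ from (\ref{eq: RD system 2 - state truncated model}) with the $H^1$-like energy of the residual modes provided by (\ref{eq: inner product Af and f}). Differentiating along (\ref{eq: RD system 2 dynamics truncated model}) and using (\ref{eq: u in function of X}), the finite part of $\dot V + 2 \delta V$ reads $X^\top(F^\top P + P F + 2\delta P) X + 2 X^\top P \mathcal L \zeta + 2 X^\top P G R$. For the tail, the modal dynamics (\ref{eq: RD system 1 spectral reduction 1}) yield $-2\gamma \sum_{n \ge N+1} \lambda_n^2 z_n^2 + 2\gamma (q_c + \delta) \sum \lambda_n z_n^2 + 2\gamma u \sum \lambda_n b_n z_n + 2\gamma \sum \lambda_n g_n z_n$.

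The core estimates are: (i) writing $\lambda_n b_n z_n = b_n \cdot (\lambda_n z_n)$ and $\lambda_n g_n z_n = g_n \cdot (\lambda_n z_n)$, Cauchy--Schwarz and Young's inequality with weights $\alpha_1, \alpha_2$ bound the two cross terms by $\alpha_1 \gamma \|\mathcal R_N b\|_{L^2}^2 u^2 + \alpha_2 \gamma \sum_{n \ge N+1} g_n^2 + (\alpha_1^{-1} + \alpha_2^{-1}) \gamma \sum \lambda_n^2 z_n^2$; (ii) the sector condition (\ref{eq: sector condition bis}) combined with Parseval yields the global bound $\sum_{n\ge 1} g_n^2 \le k_f^2 \sum_{n\ge 1} z_n^2$, which recombines with the direct identities $X^\top \Omega X = \sum_{n=1}^N z_n^2$ and $R^\top \tilde\Lambda^{-1} R = \sum_{n=1}^N g_n^2$ (obtained from $z_n = \hat z_n + e_n$ for $n \le N_0$ and $z_n = \hat z_n + \tilde e_n / \sqrt{\lambda_n}$ for $n > N_0$, together with the block structure of $\tilde\Lambda$) to produce $\alpha_2 \gamma\bigl(R^\top \tilde\Lambda^{-1} R + \sum_{n \ge N+1} g_n^2\bigr) \le \alpha_2 \gamma k_f^2 X^\top \Omega X + (\alpha_2 \gamma k_f^2 / \lambda_{N+1}) \sum \lambda_n z_n^2$; (iii) Cauchy--Schwarz gives $\zeta^2 \le M_\phi \sum_{n \ge N+1} \lambda_n z_n^2$. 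Applying $\Theta_1 \preceq 0$ to $(X, \zeta, R)$ absorbs the $u^2$ term (through (\ref{eq: u in function of X})) and the $R$ contribution against the finite-dimensional good terms; using $\sum \lambda_n^2 z_n^2 \ge \lambda_{N+1} \sum \lambda_n z_n^2$ then collapses everything to $\dot V + 2\delta V \le \Theta_2 \sum_{n \ge N+1} \lambda_n z_n^2 \le 0$, so $V(t) \le V(0) e^{-2\delta t}$. The claimed $H^1$ estimate follows on using (\ref{eq: inner product Af and f}) to compare $\sum \lambda_n z_n^2$ with $\|\mathcal R_N z\|_{H^1}^2$ from below, bounding the first $N$ modal energies by a multiple of $\|X\|^2$, and noting that well-posedness in $H^1$ of the coupled plant/observer system is classical under the global Lipschitz hypothesis on $f$.

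For the feasibility claim I would set $\alpha_1 = \alpha_2 = 2$, giving the positive factor $1 - \tfrac12(\alpha_1^{-1}+\alpha_2^{-1}) = \tfrac12$; then $\Theta_2 \to -\infty$ as $N \to \infty$ because $\lambda_{N+1} \to \infty$, $M_\phi$ tends to zero as a convergent tail (using $\phi_n(0) = O(1)$ and $\lambda_n \sim n^2$ from (\ref{eq: estimation lambda_n})), and $k_f^2/\lambda_{N+1} \to 0$. For $\Theta_1$, $F$ is block upper triangular with Hurwitz diagonal blocks of decay margin $\delta$ (by the hypothesis on $K,L$ and $\lambda_n - q_c > \delta$ for $n \ge N_0 + 1$), so a $P \succ 0$ with $F^\top P + PF + 2\delta P \prec 0$ exists; I would then choose $\beta$ and $\gamma$ to dominate the two Schur complements $\beta^{-1} P \mathcal L \mathcal L^\top P$ and $(\alpha_2 \gamma)^{-1} P G \tilde\Lambda G^\top P$, invoke $\|\mathcal R_N b\|_{L^2}^2 \to 0$ (Parseval) to kill the remaining $N$-dependent perturbation, and use continuity in $k_f$ to preserve $\Theta_1 \preceq 0$ for $k_f$ small enough. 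The main obstacle is the joint treatment of the semilinearity across the retained $N$ modes and the tail: the sector bound is inherently global in $L^2$, so its splitting must recombine the finite-dimensional piece (carried by $R^\top \tilde\Lambda^{-1} R$ and $\Omega$) with the tail piece (carried by $1/\lambda_{N+1}$) without losing control of any $\lambda_n$ factor, and this precisely dictates the form of $\Omega$, the weights in $\tilde\Lambda^{-1}$, and the placement of $\alpha_2 \gamma k_f^2/\lambda_{N+1}$ in $\Theta_2$.
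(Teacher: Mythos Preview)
Your Lyapunov argument for the stability estimate is essentially the paper's proof: same functional, same Young splittings with weights $\alpha_1,\alpha_2$, same use of the sector bound through $\sum_{n\ge 1} g_n^2 \le k_f^2 \sum_{n\ge 1} z_n^2$, the identity $\sum_{n=1}^N z_n^2 = X^\top \Omega X$, and the residual bound $\zeta^2 \le M_\phi \sum_{n\ge N+1}\lambda_n z_n^2$, leading to $\dot V + 2\delta V \le \tilde X^\top \Theta_1 \tilde X + \Theta_2 \sum_{n\ge N+1}\lambda_n z_n^2$. (Minor remark: $F$ is not literally block upper triangular because of the $B_1K$ entry, but after a permutation it is, so $F+\delta I$ is indeed Hurwitz.)

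The feasibility argument, however, has a genuine gap. With $\alpha_1=\alpha_2=2$ fixed, the two Schur complements you propose to control impose competing constraints on $\gamma$. Eliminating the $(3,3)$ block $-\alpha_2\gamma\tilde\Lambda^{-1}$ produces the term $(2\gamma)^{-1} PG\tilde\Lambda G^\top P$, whose norm is of order $\|P\|^2\lambda_N$; to absorb it into $-I$ you need $\gamma$ at least of order $\|P\|^2\lambda_N$. But then the perturbation $\alpha_1\gamma\|\mathcal R_N b\|_{L^2}^2 \tilde K^\top\tilde K$ in $\Theta_{1,1,1}$ has size of order $\|P\|^2\lambda_N\|\mathcal R_N b\|_{L^2}^2$, and for a generic $b\in L^2(0,1)$ this need not go to zero (take $b_n\sim 1/n$: then $\|\mathcal R_N b\|_{L^2}^2\sim 1/N$ while $\lambda_N\sim N^2$, so the product diverges). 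Invoking ``$\|\mathcal R_N b\|_{L^2}^2\to 0$'' is therefore not enough once $\gamma$ has been forced to grow with $N$. You also never justify any control on $\|P\|$ as $N$ grows; the mere existence of $P\succ 0$ solving $F^\top P+PF+2\delta P\prec 0$ for each fixed $N$ is insufficient for the Schur-complement balancing you describe.

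The paper resolves both issues differently: it invokes a lemma (from \cite{lhachemi2020finite}) guaranteeing that the solution of $F^\top P + PF + 2\delta P = -I$ satisfies $\|P\|=O(1)$ uniformly in $N$, then keeps $\alpha_1>1$ and $\gamma>0$ \emph{fixed} while letting $\alpha_2=N^3$, $\beta=N$, and $k_f=1/N^2$. The growth of $\alpha_2$ absorbs $\|\tilde\Lambda\|=\lambda_N=O(N^2)$ in the Schur complement without touching $\gamma$, and the induced penalty $\alpha_2\gamma k_f^2\Omega$ stays $O(1/N)$ because $k_f$ is driven to zero at the matched rate. This decoupling of $\alpha_2$ from $\gamma$ is exactly what your fixed-$\alpha_2$ scheme lacks.
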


\begin{proof}
Consider the Lyapunov functional defined for $X\in\R^{2N}$ and $z \in D(\mathcal{A})$ by (see, e.g., \cite{coron2004global})
$$V(X,z) = X^\top P X + \gamma \sum_{n \geq N+1} \lambda_n z_n^2 .$$
The computation of the time derivative of $V$ along the system trajectories (\ref{eq: RD system 1 spectral reduction 1}) and (\ref{eq: RD system 2 dynamics truncated model}) reads
\begin{align*}
& \dot{V} + 2 \delta V \\
& = X^\top \{ F^\top P + P F + 2 \delta P \} X + 2 X^\top P \mathcal{L} \zeta + 2 X^\top P G R \\
& \phantom{=}\; + 2\gamma\sum_{n \geq N+1} \lambda_n (-\lambda_n+q_c+\delta) z_n^2 + 2\gamma\sum_{n \geq N+1} \lambda_n (b_n u + g_n) z_n .
\end{align*} 
We define $\tilde{X} = \mathrm{col}(X,\zeta,R)$. Noting that $\zeta^2 \leq M_\phi \sum_{n \geq N+1} \lambda_n z_n^2$ and using Young's inequality and (\ref{eq: u in function of X}), we deduce that
\begin{align*}
& \dot{V} + 2 \delta V \leq \\
& \tilde{X}^\top 
\begin{bmatrix}
F^\top P + P F + 2 \delta P + \alpha_1\gamma \Vert \mathcal{R}_N b \Vert_{L^2}^2 \tilde{K}^\top \tilde{K} & P\mathcal{L} & P G \\
\mathcal{L}^\top P & -\beta & 0 \\
G^\top P & 0 & 0
\end{bmatrix}  
\tilde{X} \\
& + 2\gamma \sum_{n \geq N+1} \lambda_n \left\{ -\lambda_n + q_c + \delta + \frac{1}{2} \left( \frac{1}{\alpha_1} + \frac{1}{\alpha_2} \right) \lambda_n \right\} z_n^2 \\
& + \beta M_\phi \sum_{n \geq N+1} \lambda_n z_n^2 + \alpha_2 \gamma \Vert \mathcal{R}_N g(t,\cdot,z) \Vert_{L^2}^2 
\end{align*}
for any $\alpha_1,\alpha_2 > 0$. Owing to the sector condition (\ref{eq: sector condition bis}), we infer that
\begin{align}
\Vert \mathcal{R}_N g(t,\cdot,z) \Vert_{L^2}^2
& = \Vert g(t,\cdot,z) \Vert_{L^2}^2 - \sum_{n=1}^N g_n^2 \nonumber \\
& \leq k_f^2 \Vert z \Vert_{L^2}^2 - \Vert R_1 \Vert^2 \nonumber - \sum_{n=N_0+1}^N \frac{1}{\lambda_n} \left( \sqrt{\lambda_n} g_n \right)^2 \\
& \leq k_f^2 \sum_{n=1}^N z_n^2 + k_f^2 \sum_{n \geq N+1} z_n^2 - R^\top \tilde{\Lambda}^{-1} R . \label{eq: norm residue of semilinearity}
\end{align} 
Recalling that $e_n = z_n - \hat{z}_n$ and $\tilde{e}_n = \sqrt{\lambda}_n e_n$, we deduce that $\sum_{n=1}^{N_0} z_n^2 = \hat{z}_n^2 + 2 \hat{z}_n e_n + e_n^2$ and $\sum_{n=N_{0}+1}^{N} z_n^2 = \hat{z}_n^2 + \frac{2}{\sqrt{\lambda_n}} \hat{z}_n \tilde{e}_n + \frac{1}{\lambda_n} \tilde{e}_n^2$, hence $\sum_{n=1}^{N} z_n^2 = X^\top \Omega X$. This implies that

\begin{equation*}
\dot{V} + 2 \delta V 
\leq \tilde{X}^\top \Theta_1 \tilde{X} + \sum_{n \geq N+1} \lambda_n \Gamma_n z_n^2 
\end{equation*}
where $\Gamma_n = 2\gamma \left\{ - \left[ 1 - \frac{1}{2} \left( \frac{1}{\alpha_1} + \frac{1}{\alpha_2} \right) \right] \lambda_n + q_c + \delta \right\} + \beta M_\phi + \frac{\alpha_2 \gamma k_f^2}{\lambda_n} \leq \Theta_2$ for all $n \geq N+1$ where we have used that $1 - \frac{1}{2} \left( \frac{1}{\alpha_1} + \frac{1}{\alpha_2} \right) > 0$ because $\alpha_1,\alpha_2 > 1$. Owing to (\ref{eq: thm2 constraints}), we infer that $\dot{V} + 2 \delta V \leq 0$. Using the definition of $V$ along with (\ref{eq: inner product Af and f}), we deduce that the claimed exponential stability estimate holds true.

It remains to show that the constraints (\ref{eq: thm2 constraints}) are feasible when $N$ is large enough and $k_f > 0$ is small enough. First, we observe that (i) $A_0 + \mathfrak{B}_0 K + \delta I$ and $A_0 - L C_0 + \delta I$ are Hurwitz; (ii) $\Vert e^{(A_1+\delta I) t} \Vert \leq e^{-\kappa_0 t}$ for all $t \geq 0$ with the positive constant $\kappa_0 = \lambda_{N_0 +1} - q_c - \delta > 0$ independent of $N$; (iii) $\Vert L \tilde{C}_1 \Vert \leq \Vert L \Vert \Vert \tilde{C}_1 \Vert$ and $\Vert B_1 K \Vert \leq \Vert b \Vert_{L^2} \Vert K \Vert$ where $\Vert L \Vert$ and $\Vert K \Vert$ are independent of $N$ while $\Vert \tilde{C}_1 \Vert = O(1)$ a $N \rightarrow + \infty$. Hence, the application of the Lemma in appendix of \cite{lhachemi2020finite} to the matrix $F+\delta I$ shows that the unique solution $P \succ 0$ to the Lyapunov equation $F^\top P + P F + 2 \delta P = -I$ is such that $\Vert P \Vert = O(1)$ as $N \rightarrow + \infty$. We fix $\alpha_1>1$ and $\gamma > 0$ arbitrarily. We also fix $\beta=N$, $\alpha_2 = N^3$, and $k_f = 1/N^2$. Hence it can be seen that $\Theta_2 \rightarrow - \infty$ as $N \rightarrow + \infty$. Moreover, since $\Vert \tilde{K} \Vert = \Vert K \Vert$, $\Vert \tilde{L} \Vert = \sqrt{2} \Vert L \Vert$, and $\Vert G \Vert= 1$ are constants independent of $N$, $\Vert P \Vert = O(1)$ as $N \rightarrow + \infty$, and $\Omega \preceq 2 \max(1,1/\lambda_{N_0+1}) I$ and $- \tilde{\Lambda}^{-1} \preceq - \min(1,1/\lambda_N) I$, the Schur complement shows that $\Theta_1 \preceq 0$ for $N$ selected to be large enough. This completes the proof.
\end{proof}

\begin{remark}
For a fixed order $N \geq N_0+1$ of the observer and for a given value of $k_f > 0$, the constraints (\ref{eq: thm2 constraints}) are nonlinear w.r.t. the decision variables $P \succ 0$, $\alpha_1,\alpha_2>1$, and $\gamma,\beta>0$. In order to obtain a LMI formulation of the constraints, we first fix arbitrarily the value of the decision variable $\gamma > 0$. Following the last part of the proof of Theorem~\ref{thm2}, the obtained constraints remain feasible for $N$ large enough and $k_f>0$ small enough. Now $\Theta_1 \preceq 0$ takes the form of a LMI w.r.t. the decision variables $P,\alpha_1,\alpha_2,\beta$ while, using Schur complement, $\Theta_2 \leq 0$ is equivalent to the LMI formulation:
\begin{equation*}
\begin{bmatrix}
\mu & \sqrt{\gamma \lambda_{N+1}} & \sqrt{\gamma \lambda_{N+1}} \\
\sqrt{\gamma \lambda_{N+1}} & -\alpha_1 & 0 \\
\sqrt{\gamma \lambda_{N+1}} & 0 & -\alpha_2
\end{bmatrix}
\preceq 0
\end{equation*}  
with $\mu = 2\gamma \{ -\lambda_{N+1} + q_c + \delta \} + \beta M_\phi + \alpha_2 \gamma k_f^2 / \lambda_{N+1}$. A similar remark applies to the constraints of Theorem~\ref{thm3}.
\end{remark}

\section{Robin boundary control and Dirichlet boundary measurement}\label{sec: case 3}

\subsection{Problem setting and spectral reduction}

We now consider the boundary stabilization of the reaction-diffusion PDE described by
\begin{subequations}\label{eq: RD system 2}
\begin{align}
& z_t(t,x) = \left( p(x) z_x(t,x) \right)_x - \tilde{q}_0 (x) z(t,x) + f(t,x,z(t,x)) \label{eq: RD system 2 - 1} \\
&\cos(\theta_1) z(t,0) - \sin(\theta_1) z_x(t,0) = 0 \label{eq: RD system 2 - 2} \\
&\cos(\theta_2) z(t,1) + \sin(\theta_2) z_x(t,1) = u(t) \label{eq: RD system 2 - 3} \\
& z(0,x) = z_0(x) . \label{eq: RD system 2 - 4}
\end{align}
\end{subequations}
for $t > 0$ and $x \in(0,1)$ where $p \in\mathcal{C}^2([0,1])$ and $\theta_1 \in (0,\pi/2]$. The boundary measurement is selected as the left Dirichlet trace (\ref{eq: Dirichlet measurement operator}). We still assume that there exist $\tilde{q}_f \in \mathcal{C}^0([0,1])$ and $k_f > 0$ so that (\ref{eq: sector condition}) holds. Hence we define $\tilde{q} = \tilde{q}_0 - \tilde{q}_f$ and $g(t,x,z) = f(t,x,z) - \tilde{q}_f(x) z$.

Due to the boundary control $u$, the PDE (\ref{eq: RD system 2}) is non-homogeneous. Hence we introduce the change of variable
\begin{equation}\label{eq: change of variable}
w(t,x) = z(t,x) - \frac{x^2}{\cos\theta_2 + 2\sin\theta_2} u(t)
\end{equation}
so that, introducing $v = \dot{u}$, the PDE (\ref{eq: RD system 2}) is equivalently rewritten under the following homogeneous representation: 
\begin{subequations}\label{eq: RD system 2 homogeneous}
\begin{align}
& \dot{u}(t) = v(t) \label{eq: RD system 2 homogeneous - 0} \\
& w_t(t,x) = \left( p(x) w_x(t,x) \right)_x - \tilde{q}(x) w(t,x) + g(t,x,z(t,x)) \label{eq: RD system 2 homogeneous - 1} \\
& \phantom{w_t(t,x) =}\; + a(x) u(t) + b(x) v(t) \nonumber \\
& \cos(\theta_1) w(t,0) - \sin(\theta_1) w_x(t,0) = 0 \label{eq: RD system 2 homogeneous - 2} \\
& \cos(\theta_2) w(t,1) + \sin(\theta_2) w_x(t,1) = 0 \label{eq: RD system 2 homogeneous - 3} \\
& w(0,x) = w_0(x) . \label{eq: RD system 2 homogeneous - 4}
\end{align}
\end{subequations}
where $a(x) = \frac{1}{\cos\theta_2+2\sin\theta_2} \{ 2p(x) + 2xp'(x) - x^2 \tilde{q}(x) \}$, $b(x) = -\frac{x^2}{\cos\theta_2+2\sin\theta_2}$, and $w_0(x) = z_0(x) - \frac{x^2}{\cos\theta_2+2\sin\theta_2} u(0)$. Note that we kept in (\ref{eq: RD system 2 homogeneous - 1}) the nonlinear term $g(t,x,z(t,x))$ expressed in original coordinate $z$. We introduce $q \in \mathcal{C}^0([0,1])$ and $q_c \in \R$ so that (\ref{eq: decomposition of the reaction term}) holds. Finally, we define the coefficients of projection $z_n(t) = \left< z(t,\cdot) , \phi_n \right>$, $w_n(t) = \left< w(t,\cdot) , \phi_n \right>$, $a_n = \left< a , \phi_n \right>$, $b_n = \left< b , \phi_n \right>$, and $g_n(t) = \left< g(t,\cdot,z(t,\cdot)) , \phi_n \right>$. From (\ref{eq: change of variable}) we deduce that
\begin{equation}\label{eq: link z_n and w_n}
w_n(t) = z_n(t) + b_n u(t), \quad n \geq 1 .
\end{equation}
Moreover, the projection of (\ref{eq: RD system 2 homogeneous}) into the Hilbert basis $(\phi_n)_{n \geq 1}$ gives
\begin{subequations}\label{eq: dynamics w_n}
\begin{align}
\dot{u}(t) & = v(t) \\
\dot{w}_n(t) & = (-\lambda_n + q_c) w_n(t) + a_n u(t) + b_n v(t) + g_n(t) 
\end{align}
\end{subequations}
with $w(t,\cdot) = \sum_{n \geq 1} w_n(t) \phi_n$. The convergence of the latter series holds in $L^2$ norm for mild solutions and in $H^2$ norm for classical solutions. Inserting (\ref{eq: link z_n and w_n}) into (\ref{eq: dynamics w_n}), the projection of (\ref{eq: RD system 2}) gives
\begin{equation}\label{eq: dynamics z_n}
\dot{z}_n(t) = (-\lambda_n + q_c) z_n(t) + \beta_n u(t) + g_n(t)
\end{equation}
where $\beta_n = a_n + (-\lambda_n+q_c)b_n = p(1) \{ - \cos(\theta_2) \phi_n'(1) + \sin(\theta_2) \phi_n(1) \} = O(\sqrt{\lambda_n})$. In particular we have $z(t,\cdot) = \sum_{n \geq 1} z_n(t) \phi_n$ with convergence of the series in $L^2$ norm. Finally, considering classical solutions, the system output (\ref{eq: Dirichlet measurement operator}) is expressed by the series expansion
\begin{equation}\label{eq: RD system 3 spectral reduction 3}
y(t) = z(t,0) = w(t,0) = \sum_{n \geq 1} \phi_n(0) w_n(t) . 
\end{equation}

\subsection{Control design and truncated model for stability analysis}

Let $\delta > 0$ be the desired exponential decay rate for the closed-loop system trajectories. Let an integer $N_0 \geq 1$ be such that $-\lambda_n + q_c < - \delta < 0$ for all $n \geq N_0 + 1$. For an arbitrary integer $N \geq N_0 + 1$, the control strategy reads: 
\begin{subequations}\label{eq: controller 3}
\begin{align}
\hat{w}_n(t) & = \hat{z}_n(t) + b_n u(t) \\
\dot{\hat{z}}_n(t) & = (-\lambda_n+q_c) \hat{z}_n(t) + \beta_n u(t) \nonumber \\
& \phantom{=}\; - l_n \left\{ \sum_{k = 1}^N \phi_k(0) \hat{w}_k(t) - y(t) \right\} ,\quad 1 \leq n \leq N_0 \label{eq: controller 3 - 1} \\
\dot{\hat{z}}_n(t) & = (-\lambda_n+q_c) \hat{z}_n(t) + \beta_n u(t) ,\quad N_0+1 \leq n \leq N \label{eq: controller 3 - 2} \\
u(t) & = \sum_{n=1}^{N_0} k_n \hat{z}_n(t) \label{eq: controller 3 - 3}
\end{align}
\end{subequations}
where $l_n,k_n \in\R$ are the observer and feedback gains, respectively.

To build the truncated model, we adopt the notations of Subsection~\ref{subsec: control design 1} except that we introduce the notations $\tilde{z}_n = \hat{z}_n/\lambda_n$ along with the vector $\tilde{Z}^{N-N_0} = \begin{bmatrix} \tilde{z}_{N_0+1} & \ldots & \tilde{z}_{N} \end{bmatrix}^\top$ and the matrices $\mathfrak{B}_0 = \begin{bmatrix} \beta_1 & \ldots & \beta_{N_0} \end{bmatrix}^\top$ and $\tilde{\mathfrak{B}}_1 = \begin{bmatrix} \frac{\beta_{N_0 +1}}{\lambda_{N_0 +1}} & \ldots & \frac{\beta_{N}}{\lambda_{N}} \end{bmatrix}^\top$. Hence, introducing the state vector
\begin{equation}\label{eq: RD system 3 - state truncated model}
X = \mathrm{col}\left( \hat{Z}^{N_0} , E^{N_0} , \tilde{Z}^{N-N_0} , \tilde{E}^{N-N_0} \right) ,
\end{equation}
we deduce similarly to the developments of Subsection~\ref{subsec: control design 1} that the following truncated dynamics hold
\begin{equation}\label{eq: RD system 3 dynamics truncated model}
\dot{X} = F X + \mathcal{L} \zeta + G R
\end{equation}
where
\begin{equation*}
F = \begin{bmatrix}
A_0 + \mathfrak{B}_0 K & LC_0 & 0 & L\tilde{C}_1 \\
0 & A_0 - L C_0 & 0 & - L\tilde{C}_1 \\
\tilde{\mathfrak{B}}_1 K & 0 & A_1 & 0 \\
0 & 0 & 0 & A_1
\end{bmatrix}
, \quad 
G = \begin{bmatrix}
0 & 0 \\ I & 0 \\ 0 & 0 \\ 0 & I
\end{bmatrix} 
\end{equation*}
and $\mathcal{L} = \mathrm{col}(L,-L,0,0)$. We define the matrices 
\begin{equation*}
\Lambda = \mathrm{diag}(\lambda_{N_0+1},\ldots,\lambda_{N})
, \quad
\Omega = \begin{bmatrix}
I & I & 0 & 0 \\ I & I & 0 & 0 \\ 0 & 0 & \Lambda^2 & \Lambda^{1/2} \\ 0 & 0 & \Lambda^{1/2} & \Lambda^{-1}
\end{bmatrix}
\end{equation*}
and $\tilde{\Lambda} = \mathrm{diag}(I,\Lambda)$ which are such that $\Omega \preceq 2 \max(1,1/\lambda_{N_0+1},\lambda_N^2) I$ and $\tilde{\Lambda}^{-1} \succeq \min(1,1/\lambda_N) I$. Finally, introducing $\tilde{X} = \mathrm{col}(X,\zeta,R)$, we have
\begin{equation}\label{eq: expression of v in function of tilde_X}
v = \dot{u} = K \dot{\hat{Z}}^{N_0} = E \tilde{X}
\end{equation}
where $E = K \begin{bmatrix} A_0 + \mathfrak{B}_0 K & LC_0 & 0 & L\tilde{C}_1 & L & 0 \end{bmatrix}$.

\subsection{Stability result}

\begin{theorem}\label{thm3}
Let $\theta_1 \in (0,\pi/2]$, $\theta_2 \in [0,\pi/2]$, $p \in\mathcal{C}^2([0,1])$ with $p > 0$, and $\tilde{q}_0 \in\mathcal{C}^0([0,1])$. Let $f : \R_+\times[0,1]\times\R \rightarrow \R$ be globally Lipchitz continuous in $z$, uniformly in $(t,x)$, so that $f(\cdot,\cdot,0)=0$. Let $\tilde{q}_f \in \mathcal{C}^0([0,1])$ and $k_f > 0$ be so that (\ref{eq: sector condition}) holds. Let $q \in\mathcal{C}^0([0,1])$ and $q_c \in\R$ be such that (\ref{eq: decomposition of the reaction term}) holds. Let $\delta > 0$ and $N_0 \geq 1$ be such that $-\lambda_n + q_c < - \delta$ for all $n \geq N_0 + 1$. Let $K\in\R^{1 \times N_0}$ and $L\in\R^{N_0}$ be such that\footnote{Note that $(A_0,\mathfrak{B}_0)$ and $(A_0,C_0)$ satisfy the Kalman condition by arguments from~\cite{lhachemi2021nonlinear}.} $A_0 + \mathfrak{B}_0 K$ and $A_0 - L C_0$ are Hurwitz with eigenvalues that have a real part strictly less than $-\delta<0$. For a given $N \geq N_0 +1$, assume that there exist a symmetric positive definite $P\in\R^{2N \times 2N}$, positive real numbers $\alpha_1,\alpha_2,\alpha_3 > 3/2$ and $\beta,\gamma > 0$ such that 
\begin{equation}\label{eq: thm3 constraints}
\Theta_1 \preceq 0 , \quad
\Theta_2 \leq 0 
\end{equation}
where
\begin{align*}
\Theta_1 & = \begin{bmatrix}
\Theta_{1,1,1} & P \mathcal{L} & P G  \\
\mathcal{L}^\top P & -\beta & 0 \\
G^\top P & 0 & - \alpha_3 \gamma \tilde{\Lambda}^{-1}
\end{bmatrix} 
+ \alpha_2 \gamma \Vert \mathcal{R}_N b \Vert_{L^2}^2 E^\top E \\
\Theta_{1,1,1} & = F^\top P + P F + 2 \delta P + \alpha_1\gamma \Vert \mathcal{R}_N a \Vert_{L^2}^2 \tilde{K}^\top \tilde{K} \\
& \phantom{=}\; + 2\alpha_3\gamma k_f^2 \Vert \mathcal{R}_N b \Vert_{L^2}^2 \tilde{K}^\top \tilde{K} + \alpha_3 \gamma k_f^2 \Omega \\
\Theta_2 & = 2 \gamma \left\{ - \left[ 1 - \frac{1}{2} \left( \frac{1}{\alpha_1} + \frac{1}{\alpha_2} + \frac{1}{\alpha_3} \right) \right] \lambda_{N+1} + q_c + \delta \right\} \\
& \phantom{=}\; + \beta M_\phi + \frac{2 \alpha_3 \gamma k_f^2}{\lambda_{N+1}} 
\end{align*}
with $M_\phi = \sum_{n \geq N+1} \frac{\phi_n(0)^2}{\lambda_n}$. Then, considering the closed-loop system composed of the plant (\ref{eq: RD system 2}) with the system output (\ref{eq: Dirichlet measurement operator}) and the controller (\ref{eq: controller 3}), there exists $M > 0$ such that for any initial conditions $z_0 \in H^2(0,1)$ and $\hat{z}_n(0) \in\R$ so that $\cos(\theta_1) z_0(0) - \sin(\theta_1) z'_0(0) = 0$ and $\cos(\theta_2) z_0(1) + \sin(\theta_2) z'_0(1) = K \hat{Z}^{N_0}(0)$, the system trajectory satisfies 
$$\Vert z(t,\cdot)\Vert_{H^1}^2 + \sum_{n=1}^N \hat{z}_n(t)^2 \leq M e^{-2\kappa t} \left( \Vert z_0 \Vert_{H^1}^2  + \sum_{n=1}^N \hat{z}_n(0)^2 \right)$$ 
for all $t \geq 0$. Moreover, when selecting $N$ to be sufficiently large, there exists $k_f > 0$ (small enough) so that the constraints (\ref{eq: thm3 constraints}) are feasible.
\end{theorem}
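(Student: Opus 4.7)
The plan is to adapt the Lyapunov-based argument of Theorem~\ref{thm2} to the boundary-controlled setting. Because the auxiliary variable $w$ from (\ref{eq: change of variable}) satisfies the homogeneous boundary conditions of $\mathcal{A}$ (while $z$ does not, due to the control action $u$), I would use
$$V(X,w) = X^\top P X + \gamma \sum_{n \geq N+1} \lambda_n w_n^2$$
and compute $\dot V + 2\delta V$ along the truncated dynamics (\ref{eq: RD system 3 dynamics truncated model}) together with the modal dynamics $\dot w_n = (-\lambda_n + q_c) w_n + a_n u + b_n v + g_n$ from (\ref{eq: dynamics w_n}). The measurement satisfies $y(t)=z(t,0)=w(t,0)=\sum_{n \geq 1}\phi_n(0) w_n$ (the boundary lifting vanishes at $x=0$), so the residual observer innovation entering the $X$-dynamics has the form $\mathcal L \zeta$ with $\zeta = \sum_{n\geq N+1}\phi_n(0) w_n$, and in particular $\zeta^2 \leq M_\phi \sum_{n\geq N+1}\lambda_n w_n^2$, exactly as in Theorem~\ref{thm2}.

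The main novelty is that the tail derivative now contains three cross terms (from $a_n u$, $b_n v$, $g_n$) rather than two. I would treat each with its own Young's inequality with parameter $\alpha_i$: the share $(\gamma/\alpha_i) \lambda_n^2 w_n^2$ is absorbed by the diagonal $-2\gamma\lambda_n^2 w_n^2$ (the assumption $\alpha_i > 3/2$ guaranteeing $1 - \tfrac{1}{2}(1/\alpha_1 + 1/\alpha_2 + 1/\alpha_3) > 0$), while the other shares, after substituting $u = \tilde K X$ via (\ref{eq: u in function of X}) and $v = E\tilde X$ via (\ref{eq: expression of v in function of tilde_X}), yield the blocks $\alpha_1\gamma\Vert\mathcal R_N a\Vert_{L^2}^2 \tilde K^\top\tilde K$ and $\alpha_2\gamma\Vert\mathcal R_N b\Vert_{L^2}^2 E^\top E$ of $\Theta_1$. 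For the nonlinearity I would follow (\ref{eq: norm residue of semilinearity}): from (\ref{eq: sector condition bis}), $\sum_{n\geq N+1} g_n^2 \leq k_f^2\Vert z\Vert_{L^2}^2 - R^\top\tilde\Lambda^{-1}R$; since $z_n = w_n - b_n u$ by (\ref{eq: link z_n and w_n}), the elementary inequality $\Vert z\Vert_{L^2}^2 \leq \sum_{n=1}^N z_n^2 + 2\sum_{n\geq N+1} w_n^2 + 2 u^2 \Vert\mathcal R_N b\Vert_{L^2}^2$ holds, and $\sum_{n=1}^N z_n^2 = X^\top \Omega X$ with the new $\Omega$ (the $\Lambda^2$ block arising from $z_n = \lambda_n \tilde z_n + \tilde e_n/\sqrt{\lambda_n}$ for $N_0 < n \leq N$). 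This accounts for the extra $2\alpha_3\gamma k_f^2\Vert\mathcal R_N b\Vert_{L^2}^2\tilde K^\top\tilde K$ and $\alpha_3\gamma k_f^2\Omega$ terms in $\Theta_{1,1,1}$ and the $2\alpha_3 \gamma k_f^2/\lambda_{N+1}$ term in $\Theta_2$.

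Combining all bounds, (\ref{eq: thm3 constraints}) yields $\dot V + 2\delta V \leq 0$ and hence exponential decay of $V$; the announced bound on $\Vert z\Vert_{H^1}^2 + \sum_{n=1}^N\hat z_n^2$ then follows from $z = w + x^2 u / (\cos\theta_2 + 2\sin\theta_2)$, the $H^1$-equivalence (\ref{eq: inner product Af and f}) for $w \in D(\mathcal A)$, the bound $|u| \leq \Vert K\Vert \cdot \Vert\hat Z^{N_0}\Vert$, and the identity $w_n = \hat z_n + e_n + b_n u$ (with $\hat z_n = \lambda_n \tilde z_n$ for $n > N_0$) that controls the finitely many modes absent from the tail of $V$. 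For feasibility as $N\to\infty$, I would imitate the end of the proof of Theorem~\ref{thm2}: fix $\alpha_1,\gamma > 0$, let $\beta$, $\alpha_2$, $\alpha_3$ grow polynomially in $N$ and $k_f$ decay polynomially, at rates matched so that the $\alpha_3\gamma k_f^2\Omega$ block stays controlled despite $\Vert\Omega\Vert = O(\lambda_N^2)$; the Lyapunov-equation lemma of~\cite{lhachemi2020finite} applied to $F + \delta I$ still gives $\Vert P\Vert = O(1)$, so that $\Theta_2 \to -\infty$ and Schur complements deliver $\Theta_1 \preceq 0$. The main technical obstacle, relative to Theorem~\ref{thm2}, is the combined effect of the boundary-induced derivative $v = \dot u$ and the rescaling $\tilde z_n = \hat z_n/\lambda_n$ (needed because $\beta_n = O(\sqrt{\lambda_n})$): the former introduces the rank-one block $\alpha_2\gamma\Vert\mathcal R_N b\Vert_{L^2}^2 E^\top E$ and motivates the third Young parameter $\alpha_3$, while the latter promotes the lower-right $I$ block of $\Omega$ from Theorem~\ref{thm2} to $\Lambda^2$; managing this growth of $\Vert\Omega\Vert$ against the decay of $k_f$ in the Schur-complement step is the delicate point of the feasibility analysis.
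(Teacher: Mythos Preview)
Your proposal is correct and follows essentially the same route as the paper's proof: the same Lyapunov functional in the $w$-coordinates, the same three Young inequalities for the tail cross-terms $a_n u$, $b_n v$, $g_n$, the same use of (\ref{eq: norm residue of semilinearity}) together with $\sum_{n=1}^N z_n^2 = X^\top\Omega X$ and $\sum_{n\geq N+1} z_n^2 \leq 2\sum_{n\geq N+1} w_n^2 + 2\Vert\mathcal R_N b\Vert_{L^2}^2 u^2$, and the same Schur-complement feasibility argument. The only minor deviation is in the feasibility step: the paper fixes $\alpha_1,\alpha_2>3/2$ and lets only $\beta=N$, $\alpha_3=N^3$, $k_f=1/N^4$ vary (so that $\alpha_3 k_f^2 \Vert\Omega\Vert = O(N^3\cdot N^{-8}\cdot N^4)\to 0$), whereas you propose to let $\alpha_2$ grow as well---this also works here because $b(x)=-x^2/(\cos\theta_2+2\sin\theta_2)$ is smooth and hence $\Vert\mathcal R_N b\Vert_{L^2}$ decays fast, but the paper's choice avoids having to invoke that rate.
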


\begin{proof}
Consider the Lyapunov functional defined for $X\in\R^{2N}$ and $w \in D(\mathcal{A})$ by 
$$V(X,w) = X^\top P X + \gamma \sum_{n \geq N+1} \lambda_n w_n^2 .$$
The computation of the time derivative of $V$ along the system trajectories (\ref{eq: dynamics w_n}) and (\ref{eq: RD system 3 dynamics truncated model}) reads
\begin{align*}
& \dot{V} + 2 \delta V \\
& = X^\top \{ F^\top P + P F + 2 \delta P \} X + 2 X^\top P \mathcal{L} \zeta + 2 X^\top P G R \\
& \phantom{=}\; + 2\gamma\sum_{n \geq N+1} \lambda_n (-\lambda_n+q_c+\delta) w_n^2 \\
& \phantom{=}\;  + 2\gamma\sum_{n \geq N+1} \lambda_n (a_n u + b_n v + g_n) w_n .
\end{align*} 
With $\tilde{X} = \mathrm{col}(X,\zeta,R)$, noting that $\zeta^2 \leq M_\phi \sum_{n \geq N+1} \lambda_n z_n^2$, and using Young's inequality, (\ref{eq: u in function of X}), and (\ref{eq: expression of v in function of tilde_X}), we infer that
\begin{align*}
& \dot{V} + 2 \delta V \leq \\
& \tilde{X}^\top 
\begin{bmatrix}
F^\top P + P F + 2 \delta P + \alpha_1\gamma \Vert \mathcal{R}_N a \Vert_{L^2}^2 \tilde{K}^\top \tilde{K} & P\mathcal{L} & P G \\
\mathcal{L}^\top P & -\beta & 0 \\
G^\top P & 0 & 0
\end{bmatrix}  
\tilde{X} \\
& + \alpha_2 \gamma \Vert \mathcal{R}_N b \Vert_{L^2}^2 \tilde{X}^\top E^\top E \tilde{X} + \beta M_\phi \sum_{n \geq N+1} \lambda_n w_n^2 \\
& + 2\gamma \sum_{n \geq N+1} \lambda_n \left\{ -\lambda_n + q_c + \delta + \frac{1}{2} \left( \frac{1}{\alpha_1} + \frac{1}{\alpha_2} + \frac{1}{\alpha_3} \right) \lambda_n \right\} w_n^2 \\
& + \alpha_3 \gamma \Vert \mathcal{R}_N g(t,\cdot,z) \Vert_{L^2}^2 .
\end{align*}
for any $\alpha_1,\alpha_2,\alpha_3 > 0$. Using the sector condition (\ref{eq: sector condition bis}) we infer that (\ref{eq: norm residue of semilinearity}) holds. Recalling that $e_n = z_n - \hat{z}_n$, $\tilde{e}_n = \sqrt{\lambda}_n e_n$, and $\tilde{z}_n = \hat{z}_n/\lambda_n$, we deduce that $\sum_{n=1}^{N_0} z_n^2 = \hat{z}_n^2 + 2 \hat{z}_n e_n + e_n^2$ and $\sum_{n=N_{0}+1}^{N} z_n^2 = \lambda_n^2 \tilde{z}_n^2 + 2\sqrt{\lambda_n} \tilde{z}_n \tilde{e}_n + \frac{1}{\lambda_n} \tilde{e}_n^2$, hence $\sum_{n=1}^{N} z_n^2 = X^\top \Omega X$. Moreover, we infer from (\ref{eq: link z_n and w_n}) and using (\ref{eq: u in function of X}) that $\sum_{n \geq N+1} z_n^2 
\leq 2 \sum_{n \geq N+1} w_n^2 + 2 \Vert \mathcal{R}_N b \Vert_{L^2}^2 X^\top \tilde{K}^\top \tilde{K} X$. This implies that 
\begin{equation*}
\dot{V} + 2 \delta V 
\leq \tilde{X}^\top \Theta_1 \tilde{X} + \sum_{n \geq N+1} \lambda_n \Gamma_n z_n^2 
\end{equation*}
with $\Gamma_n = 2\gamma \left\{ - \left[ 1 - \frac{1}{2} \left( \frac{1}{\alpha_1} + \frac{1}{\alpha_2} + \frac{1}{\alpha_3} \right) \right] \lambda_n + q_c + \delta \right\} + \beta M_\phi + \frac{2 \alpha_3 \gamma k_f^2}{\lambda_n} \leq \Theta_2$ for all $n \geq N+1$ where we have used that $1 - \frac{1}{2} \left( \frac{1}{\alpha_1} + \frac{1}{\alpha_2} + \frac{1}{\alpha_3} \right) > 0$ because $\alpha_1,\alpha_2,\alpha_3 > 3/2$. Owing to (\ref{eq: thm3 constraints}), we infer that $\dot{V} + 2 \delta V \leq 0$. Using the definition of $V$ and (\ref{eq: inner product Af and f}), we deduce that the claimed exponential stability estimate holds true.

It remains to show that the constraints (\ref{eq: thm3 constraints}) are feasible when $N$ is large enough and $k_f > 0$ is small enough. Since $\Vert \tilde{C}_1 \Vert = O(1)$ and $\Vert \tilde{\mathfrak{B}}_1 \Vert = O(1)$ as $N \rightarrow + \infty$, the same approach as in the proof of Theorem~\ref{thm2} shows that the unique solution $P \succ 0$ to the Lyapunov equation $F^\top P + P F + 2 \delta P = -I$ is such that $\Vert P \Vert = O(1)$ as $N \rightarrow + \infty$. We fix $\alpha_1,\alpha_2>3/2$ and $\gamma > 0$ arbitrarily. We also fix $\beta=N$, $\alpha_3 = N^3$, and $k_f = 1/N^4$. Hence it can be seen that $\Theta_2 \rightarrow - \infty$ as $N \rightarrow + \infty$. Moreover, since $\Vert \tilde{K} \Vert = \Vert K \Vert$, $\Vert \tilde{L} \Vert = \sqrt{2} \Vert L \Vert$, and $\Vert G \Vert= 1$ are constants independent of $N$, $\Vert P \Vert = O(1)$ and $\Vert E \Vert = O(1)$ as $N \rightarrow + \infty$, and $\Omega \preceq 2 \max(1,1/\lambda_{N_0+1},\lambda_N^2) I$ and $- \tilde{\Lambda}^{-1} \preceq - \min(1,1/\lambda_N) I$, the Schur complement shows that $\Theta_1 \preceq 0$ for $N$ selected to be large enough. This completes the proof.
\end{proof}

\section{Numerical illustration}

For numerical illustration, we consider $p = 1$, $\tilde{q}_0 = -2$, $\tilde{q}_f = 3$, and $\theta_1 = \pi/5$, and $\theta_2 = 0$. Hence the open-loop reaction diffusion PDEs (\ref{eq: RD system 1}) and (\ref{eq: RD system 2}) are unstable in the case of $k_f = 0$, i.e. for $f(t,x,z) = \tilde{q}_f z = 3z$. In the case of the reaction-diffusion PDE with bounded control input (\ref{eq: RD system 1}), we consider the shape function $b(x) = \cos(x) 1\vert_{[1/10,3/10]}$.

\begin{table*}
\center
\begin{tabular}{c||c|c|c|c|c}
Dimension of the observer & $N = 2$ & $N = 3$ & $N = 4$ & $N = 5$ & $N = 6$ \\
\hline\hline
Theorem~\ref{thm2} & $k_f = 1.99$ & $k_f = 2.32$ & $k_f = 2.45$ & $k_f = 2.54$ & $k_f = 2.59$ \\
\hline
Theorem~\ref{thm3} & $k_f = 1.93$ & $k_f = 2.14$ & $k_f = 2.21$ & $k_f = 2.25$ & $k_f = 2.27$
\end{tabular}
\caption{Maximum value of $k_f$ for the sector condition (\ref{eq: sector condition}) obtained for different dimensions $N$ of the observer.}
\label{tab: LMIs}
\end{table*}

The feedback gain $K$ and the observer gain $L$ are computed to achieve the same pole placement in the different studied settings. In the case of Theorem~\ref{thm2}, corresponding to a bounded input operator and a left Dirichlet measurement, the gains are set as $K = -25.9768$ and $L = 5.9341$. In the case of Theorem~\ref{thm3}, corresponding to a right boundary control and a left Dirichlet measurement, the gains are set as $K = -2.3307$ and $L = 5.9341$. 
    
The maximal value of $k_f > 0$ (corresponding to the size of the sector condition (\ref{eq: sector condition}) in which the nonlinearity $f(t,x,z)$ is confined into) for which the stability of the closed-loop system is ensured by applying the theorems of this paper is detailed in Tab.~\ref{tab: LMIs} for the two studied configurations and for different values of the order of the observer.

\section{Conclusion}\label{sec: conclusion}

This paper discussed the topic of output feedback stabilization of semilinear reaction-diffusion PDEs for which the semilinearity is assumed to be confined into a sector condition.  We addressed multiple actuation configurations, including bounded input operators and Robin boundary control. The measurement was selected as a Dirichlet boundary measurement. The adopted control design procedure is systematic in the sense that the derived stability conditions are always feasible provided the order of the controller is selected large enough and the size of the sector condition is small enough. 

It is worth noting that even if the developments of this paper have been focused on a Dirichlet boundary measurement, the case of a Neumann boundary measurement can also be handled using the framework of this paper augmented with the scaling procedure reported in~
\cite{lhachemi2020finite,lhachemi2021nonlinear}.

Future research directions may be concerned with the extension of the method to more stringent nonlinearities such as memory based nonlinearity maps.

% if have a single appendix:
%\appendix[Proof of the Zonklar Equations]
% or
%\appendix  % for no appendix heading
% do not use \section anymore after \appendix, only \section*
% is possibly needed

% use appendices with more than one appendix
% then use \section to start each appendix
% you must declare a \section before using any
% \subsection or using \label (\appendices by itself
% starts a section numbered zero.)

%\appendix

%\section*{Technical lemma}

% use section* for acknowledgment
%\section*{Acknowledgment}

% Can use something like this to put references on a page
% by themselves when using endfloat and the captionsoff option.
\ifCLASSOPTIONcaptionsoff
  \newpage
\fi

% trigger a \newpage just before the given reference
% number - used to balance the columns on the last page
% adjust value as needed - may need to be readjusted if
% the document is modified later
%\IEEEtriggeratref{8}
% The "triggered" command can be changed if desired:
%\IEEEtriggercmd{\enlargethispage{-5in}}

% references section

% can use a bibliography generated by BibTeX as a .bbl file
% BibTeX documentation can be easily obtained at:
% http://mirror.ctan.org/biblio/bibtex/contrib/doc/
% The IEEEtran BibTeX style support page is at:
% http://www.michaelshell.org/tex/ieeetran/bibtex/
\bibliographystyle{IEEEtranS}
\nocite{*}
\bibliography{IEEEabrv,mybibfile}

\end{document}